\numberwithin{equation}{section} \hyphenation{semi-stable}
\font\tengothic=eufm10 scaled\magstep 1 \font\sevengothic=eufm7
\newtheorem{theorem}{Theorem}[section]
\newtheorem{lemma}[theorem]{Lemma}
\newtheorem{proposition}[theorem]{Proposition}
\newtheorem{conjecture}[theorem]{Conjecture}
\theoremstyle{definition}
\newtheorem{definition}[theorem]{Definition} 
\newtheorem{remark}[theorem]{Remark}
\newtheorem{question}[theorem]{Question}
\newcommand{\Soc}{\operatorname{Soc}}
\newcommand{\Ann}{\operatorname{Ann}}
\newcommand {\ZZ}{\mathbb{Z}}
\newcommand {\PP}{\mathbb{P}}
\newcommand{\fa}{\mathfrak a}
\begin{document}

\title[Harbourne, Schenck and Seceleanu's Conjecture]{Harbourne, Schenck and Seceleanu's Conjecture}

\author[R.\ M.\ Mir\'o-Roig]{Rosa M.\ Mir\'o-Roig${}^{*}$}
\address{Facultat de Matem\`atiques, Department d'\`Algebra i
 Geometria, Gran Via des les Corts Catalanes 585, 08007 Barcelona, Spain}
\email{miro@ub.edu}

\thanks{\noindent Printed \today \\
${}^{*}$  Part of the work for this paper was done while the
author was sponsored by MTM2013-45075-P.\\
}

\renewcommand{\subjclassname}{MSC2010}

\subjclass[2010]{Primary  13D02 13C13, 13C40, 13D40}
\keywords{Weal Lefschetz property, powers of linear forms, artinian algebras}

\begin{abstract}

In \cite{HSS}, Conjecture 5.5.2, Harbourne, Schenck and Seceleanu conjectured that, for $r=6$ and all $r\ge 8$, the artinian ideal $I=(\ell _1^2,\dots ,l_{r+1}^2)\subset K[x_1, \dots ,x_r]$ generated by the square of $r+1$ general linear forms $\ell _{i}$ fails the Weak Lefschetz property. This paper is entirely devoted to prove this Conjecture. It is worthwhile to point out that  half of the  Conjecture -  namely, the case when the number of variables $r$ is even - was already proved in \cite{mmn}, Theorem 6.1. \end{abstract}

\maketitle

\tableofcontents

\section{Introduction}

Ideals generated by powers of linear forms have attracted great deal of attention recently. For instance, their Hilbert function  have been the focus of the papers \cite{AP}, \cite{sx} and \cite{HSS}; and the presence or failure of the Weak Lefschetz Property has been  deeply studied in \cite{HSS}, \cite{mmn} and \cite{ss}, among others.

Let $A = R/I$ be a standard graded artinian algebra, where $R = K[x_1,\dots,x_r]$ and $K$ is an algebraically closed field of characteristic 0.  If $\ell$ is a linear form then multiplication by $\ell$ induces a homomorphism from any graded component $[A]_i$ to the next and    the algebra $A$ is said to have the {\em Weak Lefschetz Property (WLP)} if the ``multiplication by a general linear form has maximal rank from each degree to the next." There has been a long series of papers determining classes of algebras holding/failing the WLP but much more work remains to be done.

The first result in this direction is due to Stanley \cite{stanley} and Watanabe \cite{watanabe} and it asserts that the WLP holds for an artinian complete intersection ideal generated by powers of linear forms. In \cite{ss}, Schenck and Seceleanu gave the nice result that in three variables {\em any} ideal generated by powers  of general linear forms has WLP. In contrast, in \cite{mmn}, we showed by examples that in 4 variables for $d=3, \dots , 12$ an ideal generated by the $d$-th power of five general  linear forms does not have the WLP. Therefore, it is natural to ask when the WLP holds for artinian ideals $I\subset K[x_1,\dots ,x_r]$ generated  by powers of $\ge r+1$ general linear forms. The goal of this short note is to solve Conjecture 5.5.2 in \cite{HSS} which says that for $r=6$ and $r\ge 8$, the artinian ideal $I=(\ell _1^2,\dots ,l_{r+1}^2)\subset K[x_1, \dots ,x_r]$ generated by the square of $r+1$ general linear forms $\ell _{i}$ fails the Weak Lefschetz property.  Half of the  Conjecture -  namely, the case when the number of variables $r$ is even - was proved in \cite{mmn}, Theorem 6.1 where first we use
 the inverse system dictionary to relate an ideal $I\subset K[x_1, \cdots ,x_r]$ generated by powers of linear forms to an ideal of fat points in $\PP^{r-1}$, and  then we show that the WLP problem of an ideal generated by powers of linear forms is  closely connected to the geometry  of the linear system  of hypersurfaces in $\PP^{r-2}$ of fixed degree  with preassigned multiple points.
 In this short note, we solve the remaining half - the case of an odd number of variables. The key point is to determine the degree where the WLP fails.

 \vspace{3mm}

Next, we outline the structure of the paper. In Section 2, we fix notation and we  briefly discuss general facts on Weak Lefschetz property needed later on. Section 3 is the heart of the paper and contains the proof of the conjecture stated by Harbourne, Schenck and Seceleanu in \cite{HSS}, Conjecture 5.5.2.


\section{Background and preparatory results}

In this section we fix notation, we recall the definition of Weak/Strong Lefschetz Property and we state some open conjectures related to the Weak Lefschetz property of ideals generated by powers of linear forms which are motivating  current research in this vast topic of research which touch numerous and different areas like algebraic geometry, commutative algebra and combinatorics.

Throughout this work  $K$ will be an algebraically closed field of characteristic zero.
 Given a graded artinian $K$-algebra $A=R/I$ where $R=K[x_1,x_2,\dots,x_r]$ and $I$ is a homogeneous ideal of $R$,
we denote by $H_A:\mathbb{Z} \longrightarrow \mathbb{Z}$ with $H_A(j)=\dim _K[A]_j$
its Hilbert function. Since $A$ is artinian, its Hilbert function is
captured in its {\em $h$-vector} $h=(h_0,h_1,\dots ,h_e)$ where $h_i=H_A(i)>0$ and $e$ is the last index with this property. The integer $e$ is called the {\em socle degree of} $A$.

\begin{definition}
Let $A=R/I$ be a graded artinian $K$-algebra. We say that $A$ is {\em level of Cohen-Macaulay type} $t$ if its socle is concentrated in one degree and has dimension $t$. E.g. a complete intersection is level of Cohen-Macaulay type 1 and a Gorenstein artinian $K$-algebra is also level of Cohen-Macaulay type 1.
\end{definition}

For  fixed
$e$ and $t$, a level graded artinian $K$-algebra of socle degree $e$, Cohen-Macaulay type $t$ and of maximal Hilbert function among
all level graded artinian $K$-algebras with that socle degree and Cohen-Macaulay type is said to be {\em
compressed}.  We can extend this notion as follows (see \cite{mmn2}, Definition 1.1, for more details):

\begin{definition} \label{def-compr}
Let $\fa \subset R$ be a homogeneous ideal.
    Then a level graded artinian $K$-algebra $A$ of socle degree $e$ and Cohen-Macaulay type $t$ is said to be
    {\it relatively compressed with respect to $\fa $} if $A$ has maximal length among all level graded artinian
    $K$-algebras $R/I$ satisfying
\begin{itemize}
\item[(i)] $\Soc R/I\cong  K(-e)^t$
\item[(ii)]
$\fa \subset I$.
\end{itemize}
\end{definition}

Equivalently, $A = R/I$ is relatively compressed with respect to $\fa $
if it is a quotient of $R/\fa $ having maximal length among all quotients of $R/\fa $ with
prescribed socle degree and Cohen-Macaulay type.

For level artinian algebras $A$ relatively compressed with respect to a complete
intersection $\fa \subset R$ we  have an upper bound for its Hilbert function. In fact, we have

\begin{lemma} \label{first bound}
Let $A =
R/I$ be a graded level artinian $K$-algebra of  socle  degree $e$, Cohen-Macaulay type $t$ and
relatively compressed with respect to a complete intersection $\fa
\subset R$.  Then, we have
\[
h_A(i) \leq \min \{ \dim [R/\fa]_i, t \cdot \dim [R/\fa ]_{e-i} \}.
\]
\end{lemma}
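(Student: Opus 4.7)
The plan is to verify the two quantities in the minimum separately. The first bound $h_A(i) \le \dim[R/\fa]_i$ is immediate from $\fa \subseteq I$: the algebra $A = R/I$ is a graded quotient of $R/\fa$, so the canonical surjection $[R/\fa]_i \twoheadrightarrow [A]_i$ supplies the inequality degree by degree. The hypothesis that $\fa$ is a complete intersection is not needed here.

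For the second bound I would pass to Macaulay's inverse system. Let $S = K[y_1,\dots,y_r]$ be regarded as an $R$-module via contraction (equivalently, since $\mathrm{char}(K) = 0$, via partial differentiation), and set $I^{-1} := \{F \in S \mid g \circ F = 0 \text{ for all } g \in I\}$. Macaulay duality yields $h_A(i) = \dim_K [I^{-1}]_i$ for every $i$. Because $A$ is level of socle degree $e$ and Cohen-Macaulay type $t$, the $R$-submodule $I^{-1} \subseteq S$ is minimally generated by exactly $t$ forms $F_1,\dots,F_t$, each of degree $e$; consequently $[I^{-1}]_i = \sum_{j=1}^t R_{e-i} \circ F_j$ for every $0 \le i \le e$.

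Now the containment $\fa \subseteq I$ dualizes to $I^{-1} \subseteq \fa^{-1}$, which translates into $\fa \circ F_j = 0$ for each $j = 1,\dots,t$. Consequently, for every fixed $j$ the $K$-linear map $R_{e-i} \to [I^{-1}]_i$, $g \mapsto g \circ F_j$, has $\fa_{e-i}$ in its kernel, and therefore factors through $[R/\fa]_{e-i}$. Summing over $j$ and taking dimensions:
\[
h_A(i) \;=\; \dim[I^{-1}]_i \;\le\; \sum_{j=1}^t \dim(R_{e-i} \circ F_j) \;\le\; t \cdot \dim[R/\fa]_{e-i}.
\]
Combining this with the first bound finishes the proof. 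The only point requiring any care is the direction of Macaulay duality, which reverses the containment $\fa \subseteq I$; once that is in hand the argument is purely formal. I note in passing that the "relatively compressed" hypothesis is not actually used in the upper bound itself — it merely picks out the class of algebras that are expected to attain equality.
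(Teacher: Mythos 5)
Your proof is correct. Note that the paper itself supplies no proof of Lemma \ref{first bound}: it is stated as a known fact from the compressed-algebra literature (cf.\ \cite{mmn2}), and the only argument given is the subsequent Remark, which treats the case $t=1$ by combining the surjection $R/\fa \twoheadrightarrow A=R/\Ann(F)$ with the symmetry $h_A(i)=h_A(e-i)$ of the Gorenstein Hilbert function. Your inverse-system argument is the natural generalization to arbitrary type $t$, where no such symmetry of $h_A$ is available: levelness of socle degree $e$ and type $t$ says exactly that $I^{\perp}$ is generated by $t$ forms $F_1,\dots,F_t$ of degree $e$, so $[I^{\perp}]_i=\sum_j R_{e-i}\circ F_j$, and the containment $\fa\subseteq I$ makes each map $g\mapsto g\circ F_j$ factor through $[R/\fa]_{e-i}$, giving $h_A(i)\le t\cdot \dim[R/\fa]_{e-i}$; the first bound is immediate from the surjection $[R/\fa]_i\twoheadrightarrow [A]_i$. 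As you observe, the complete-intersection hypothesis on $\fa$ plays no role in either bound. One small correction to your closing remark, however: the hypothesis that $A$ is relatively compressed with respect to $\fa$ is not entirely idle, since by Definition \ref{def-compr}(ii) it is precisely what guarantees the containment $\fa\subseteq I$ that both of your bounds rely on; what is genuinely unused is only the maximality of the length, which, as you say, serves to single out the algebras expected to achieve equality (see the paper's Remark and the reference to \cite{Iarrobino-LNM} there).
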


\begin{remark} \rm For $t=1$, we take $F \in
[\fa _e]^{\perp}$ and we
consider the Gorenstein artinian graded $K$-algebra $A=R/\Ann(F)$.
Because $R/\Ann(F)$ is a  quotient of $R/\fa $, which is Gorenstein, we
clearly have
\begin{equation} \label{hilb fn bound}
h_A(i) \leq \min \{ \dim [R/\fa ]_i, \dim [R/\fa ]_{e-i} \}.
\end{equation}
Note that by \cite{Iarrobino-LNM}, Theorem 4.16, if $\fa $ and $F$ are
both  general (or if $\fa $ is a monomial complete intersection and $F$
is general)  then we have equality in (\ref{hilb fn
bound}).
\end{remark}

\begin{definition}
Let $A=R/I$ be a graded artinian $K$-algebra. We say that $A$ has the {\em Weak Lefschetz Property} (WLP)
if there is a linear form $L \in [A]_1$ such that, for all
integers $i\ge0$, the multiplication map
\[
\times L: [A]_{i}  \longrightarrow  [A]_{i+1}
\]
has maximal rank, i.e.\ it is injective or surjective.
(We will often abuse notation and
say that the ideal $I$ has the WLP.) In this case, the linear form $L$ is called a Lefschetz
element of $A$. If for the general form $L \in [A]_1$ and for an integer number $j$ the
map $\times L:[A]_{j-1}  \longrightarrow  [A]_{j}$ does not have maximal rank, we will say that the ideal $I$ fails the WLP in
degree $j$.

$A$ has the {\em Strong Lefschetz Property} (SLP) if there is a linear form $L \in [A]_1$ such that, for all
integers $i\ge0$ and $k\ge 1$, the multiplication map
\[
\times L^k: [A]_{i}  \longrightarrow  [A]_{i+k}
\]
has maximal rank.
\end{definition}

In this paper we will deal with artinian ideals  $I=(\ell _1 ^{a_1},\dots ,\ell_{s}^{a_s})\subset K[x_1,\dots ,x_r]$ generated by powers of $s\ge r$ general linear forms $\ell _i$ and we relate them to the presence or failure of the WLP.
The first result in this topic is due to R. Stanley \cite{stanley} and J. Watanabe \cite{watanabe} and it has motivated this entire area of research. It says:

\begin{proposition}\label{WLPSLPci}
Let $R=K[x_1,\dots ,x_r]$ and let $\fa$ be an artinian monomial complete intersection, i.e. $\fa =(x_1^{a_1},\dots ,x_r^{a_r})$. Then, $R/I$ has both the WLP and the SLP.
\end{proposition}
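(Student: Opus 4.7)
The plan is to prove SLP directly, since WLP is the special case $k = 1$. I would realize $A = R/\fa$ geometrically as the cohomology ring of a product of projective spaces and invoke the Hard Lefschetz theorem.

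First I would set $n_i := a_i - 1$ and consider the smooth projective variety $X := \PP^{n_1} \times \cdots \times \PP^{n_r}$. Its rational cohomology ring is the truncated polynomial algebra
\[
H^{\bullet}(X,\CC) \;\cong\; \CC[\omega_1, \ldots, \omega_r]/(\omega_1^{a_1}, \ldots, \omega_r^{a_r}),
\]
where $\omega_i$ is the pullback of the hyperplane class of the $i$-th factor, living in cohomological degree $2$. Identifying $x_i \leftrightarrow \omega_i$ (and extending scalars from $K$ to $\CC$, which is harmless since WLP/SLP is a statement about the rank of matrices over the base field) gives $A \otimes_K \CC \cong H^{\bullet}(X,\CC)$ as graded algebras, with the algebraic grading on $A$ corresponding to half the cohomological grading.

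Next I would apply Hard Lefschetz. The class $\omega := \omega_1 + \cdots + \omega_r$ is the first Chern class of an ample line bundle on $X$ (the product polarization), hence a K\"ahler class, and $X$ has complex dimension $e = \sum(a_i - 1)$, which is also the socle degree of $A$. Hard Lefschetz then yields, for every $0 \leq i \leq e/2$, an isomorphism $\omega^{e - 2i} : H^{2i}(X,\CC) \xrightarrow{\sim} H^{2(e-i)}(X,\CC)$, which translates to the statement that multiplication by $L^{e-2i}$ is an isomorphism $[A]_i \to [A]_{e-i}$, where $L := x_1 + \cdots + x_r$.

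The final step is routine: this ``narrow'' strong Lefschetz property promotes to full SLP by a factorization argument. For $i + k \leq e - i$, write $L^{e-2i} = L^{e-2i-k} \circ L^k$; injectivity of the composition forces injectivity of $\times L^k : [A]_i \to [A]_{i+k}$. For $i + k > e - i$, a dual factorization (together with the symmetry $h_i = h_{e-i}$ forced by the isomorphisms) forces surjectivity of $\times L^k$ onto $[A]_{i+k}$. The main conceptual input is the invocation of Hard Lefschetz, which is what makes the monomial complete intersection case special; Watanabe's alternative approach equips $A$ with an $\mathfrak{sl}_2$-action and deduces SLP from the representation theory of $\mathfrak{sl}_2$, providing a purely algebraic substitute for the geometric input. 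No serious obstacle remains once the identification with $H^{\bullet}(X,\CC)$ is in place.
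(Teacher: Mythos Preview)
Your argument is correct and is precisely Stanley's original proof, which the paper does not reproduce but merely cites (together with Watanabe). The identification of $R/\fa$ with the cohomology of $\PP^{n_1}\times\cdots\times\PP^{n_r}$, the ampleness of $\omega_1+\cdots+\omega_r$, and the factorization argument passing from the Hard Lefschetz isomorphisms to full SLP are all standard and handled cleanly. The remark about base change is also fine: the algebra and the Lefschetz element $L=x_1+\cdots+x_r$ are defined over $\QQ$, so the ranks of the multiplication maps are insensitive to which characteristic-zero field one works over. Your closing comment about Watanabe's $\mathfrak{sl}_2$ approach correctly identifies the other proof the paper is pointing to.
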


It is worthwhile to point out that the above result is false in positive characteristic. Trying to extend the above proposition we can ask which ideals generated by powers of general linear forms define $K$-algebras that fail WLP. In two and three variables all such algebras satisfy the WLP. In fact, we have:

\begin{proposition}
  \label{cor:ss-result}
  (1) Any homogeneous artinian ideal in $K[x,y]$ has WLP. In particular, every artinian ideal generated by powers of linear forms in 2 variables has the WLP.

(2) Every artinian ideal generated by powers of general linear forms in 3 variables has the WLP.
\end{proposition}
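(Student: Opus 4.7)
The plan for part (1) is to exploit that, in $R = K[x,y]$, modding out by a general linear form $L$ produces a \emph{univariate} quotient. Writing $A = R/I$, the algebra $A/LA$ is an artinian quotient of $R/(L) \cong K[t]$, hence $A/LA \cong K[t]/(t^d)$ for some $d$; in particular $\dim_K[A/LA]_i \le 1$ in every degree. Plugging this into the standard four-term exact sequence
$$0 \to [(I:L)/I](-1) \to A(-1) \xrightarrow{\times L} A \to A/LA \to 0$$
yields, in every degree, the identity
$$\dim_K[(I:L)/I]_{i-1} \;=\; \dim_K[A/LA]_i \;-\; \bigl(h_A(i)-h_A(i-1)\bigr).$$
The heart of the proof is then to show that, for generic $L$, the kernel module $(I:L)/I$ and the cokernel $A/LA$ are supported in \emph{disjoint} degree ranges, so in every degree multiplication by $L$ is either injective or surjective. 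A convenient organizing principle is that $L$ annihilates $(I:L)/I$, making it a graded module over $A/LA \cong K[t]/(t^d)$; combined with the very simple structure of $K[t]/(t^d)$ and the genericity of $L$, this forces $(I:L)/I$ to live in degrees $\ge d-1$, which is exactly the non-overlap needed. Equivalently one may appeal to the classical fact that any artinian quotient of $K[x,y]$ has the strong Lefschetz property.

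For part (2), the plan is to follow Schenck--Seceleanu. Via Macaulay's inverse system, the ideal $I = (\ell_1^{a_1},\dots,\ell_s^{a_s}) \subset K[x,y,z]$ corresponds to a fat-point scheme on $\PP^2$ supported at general points, and failure of WLP in a given degree translates into the existence of an ``unexpected'' plane curve of fixed degree with prescribed multiplicities at those general points. One then invokes the detailed classification of special linear systems on $\PP^2$ with base conditions at general points (Segre, Harbourne--Hirschowitz, Ciliberto--Miranda, and others): any such speciality is explained by $(-1)$-curves, and in the range that matters no unexpected curve exists. A comparison of Hilbert functions on the two sides of the apolarity dictionary then forces multiplication by a general linear form to have maximal rank in every degree.

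The main obstacle in part (1) is certifying the non-overlap of kernel and cokernel degrees, which is a tight Hilbert-series bookkeeping specific to two-variable artinian algebras. In part (2), the main obstacle is importing the classification of special linear systems on $\PP^2$; this deep input is precisely what distinguishes three variables from the higher-variable regime that is the real object of this paper.
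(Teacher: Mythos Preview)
The paper's own proof consists of two bare citations: part~(1) to \cite{HMNW}, Proposition~4.4, and part~(2) to \cite{ss}, Theorem~2.4. Your proposal goes further and sketches actual arguments, so the relevant comparison is with those original sources.

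For part~(1), your framework is correct and matches the standard argument: the four-term exact sequence, the identification $A/LA \cong K[t]/(t^d)$, and the goal of separating kernel and cokernel degrees. However, the justification of the key step has a gap. Saying that $(I:L)/I$ is a module over $K[t]/(t^d)$ does not by itself force it to live in degrees $\ge d-1$; any graded $K$-vector space concentrated in degree $0$ is trivially such a module. The actual reason is elementary and does not use the module structure: for general $L$, the integer $d$ equals the \emph{initial degree} of $I$, because a minimal-degree generator of $I$ factors into linear forms, none of which coincides with $L$ generically, so its image in $R/(L)\cong K[t]$ is nonzero of degree $d$. Hence for $i\le d-1$ one has $[I]_{i-1}=[I]_i=0$ and $\times L\colon [R]_{i-1}\to [R]_i$ is injective, while for $i\ge d$ the cokernel vanishes and the map is surjective. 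Equivalently, if $0\ne f\in (I:L)$ has degree $e$, then $0\ne Lf\in I$ has degree $e+1\ge d$, so $e\ge d-1$. With this correction your sketch is complete.

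For part~(2) there is a genuine gap. You write that one ``invokes the detailed classification of special linear systems on $\PP^2$'' and that ``any such speciality is explained by $(-1)$-curves''. That statement is precisely the Segre--Harbourne--Gimigliano--Hirschowitz conjecture, which is still \emph{open} in general. Since the proposition allows an arbitrary number of general linear forms, hence arbitrarily many general base points in $\PP^2$, the known partial cases of SHGH do not cover what you need. The Schenck--Seceleanu proof in \cite{ss} does pass through the inverse-system/fat-points dictionary, but their argument is a short direct one and does not rely on any conjectural classification of special linear systems. As written, your outline for (2) rests on an unproven statement and should be replaced by the actual argument of \cite{ss} (or a citation to it, as the paper does).
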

\begin{proof} (1) See \cite{HMNW}, Proposition 4.4.

(2) See \cite{ss}, Theorem 2.4.
\end{proof}

The above proposition is no longer true for ideals generated by powers of linear forms in $r\ge 4$ variables. In fact, in \cite{mmn} we prove that $A:=K[x_1,x_2,x_3,x_4]/ (x_1^3,x_2^3,x_3^3,x_4^4, (x_1+x_2+x_3+x_4)^3)$ does not have the WLP. The $h$-vector  of $A$ is $(1,4,10,15,15,6)$  and $A_3\longrightarrow A_4$ has not maximal rank. The following  questions naturally arise from this example:

\begin{question} (1) (Almost complete intersections and uniform powers) Let $I=(\ell _1^t,\dots ,\ell_{r+1}^t)\subset K[x_1,\dots ,x_r]$ be an almost complete intersection ideal generated by uniform powers of general linear forms. For which values of $r$ and $t$ does $K[x_1,\dots ,x_r]/I$ fail WLP?

(2) (Uniform powers) Let $I=(\ell _1^t,\dots ,\ell_{s}^t)\subset K[x_1,\dots ,x_r]$ be an artinian ideal generated by uniform powers of general linear forms. For which values of $r$, $s$ and $t$ does $K[x_1,\dots ,x_r]/I$ fail WLP?

(3) (Mixed powers) Let $I=(\ell _1^{a_1},\dots ,\ell_{s}^{a_s})\subset K[x_1,\dots ,x_r]$ be an artinian ideal generated by  powers of general linear forms. For which values of $r$, $s$ and $a_i$ does $K[x_1,\dots ,x_r]/I$ fail WLP?

\end{question}

Nice contributions to these problems are given in \cite{ss}, \cite{mmn} and \cite{HSS} but no complete answers to them  are known and more work has to be done. Here there are  three open conjectures from \cite{mmn} and\cite{HSS}  related to these questions:

\begin{conjecture} \label{conj1} (\cite{HSS}, Conjecture 5.5.2) For $r=6$ and all $r\ge 8$, $K[x_1,\dots ,x_r]/(\ell _1^2,\dots ,\ell_{r+1}^2)$ where $\ell _i$ are general linear forms, fails WLP.
\end{conjecture}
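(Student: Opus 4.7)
The plan is to use Macaulay's inverse system to reduce the WLP question to the existence of an unexpected hypersurface in $\PP^{r-2}$, following the fat-point strategy of \cite{mmn}, and then to produce this hypersurface for all odd $r\ge 9$.

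\emph{Step 1 (Inverse system).} Let $P_1,\dots,P_{r+1}\in\PP^{r-1}$ be the points dual to $\ell_1,\dots,\ell_{r+1}$. Macaulay's apolarity gives
\[
\dim[R/I]_j \;=\; \dim H^0\bigl(\PP^{r-1},\cI_{(j-1)\sum_i P_i}(j)\bigr).
\]
For a general linear form $L$ one has $R/(I,L)\cong \bar R/\bar I$ with $\bar R=K[x_1,\dots,x_{r-1}]$ and $\bar I=(\bar\ell_1^2,\dots,\bar\ell_{r+1}^2)$; the analogous apolarity formula then holds on a general hyperplane $\PP^{r-2}\subset\PP^{r-1}$.

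\emph{Step 2 (Critical degree).} Write $r=2m+1$ with $m\ge 4$. The apolarity formula identifies the Hilbert function of $R/I$ with the truncation of $(1-t)(1+t)^{r+1}$: it strictly increases to the unique peak $h_m=\binom{r+1}{m}-\binom{r+1}{m-1}$ and then drops to the socle value $h_{m+1}=\binom{r+1}{m+1}-\binom{r+1}{m}$. In every lower degree the Hilbert values match those required by WLP, so the sole candidate for failure is the surjectivity of
\[
\times L\colon [R/I]_m\longrightarrow[R/I]_{m+1},
\]
which is equivalent to $\dim[R/(I,L)]_{m+1}=0$.

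\emph{Step 3 (Unexpected hypersurface --- heart of the proof).} It remains to produce a nonzero element of $H^0\bigl(\PP^{r-2},\cI_{m\sum_i\bar P_i}(m+1)\bigr)$, i.e., a degree-$(m+1)$ hypersurface in $\PP^{r-2}$ of multiplicity $\ge m$ at each of the $r+1$ general points $\bar P_i$. Since $r+1=(r-2)+3$, the $\bar P_i$ lie on a unique rational normal curve $C$ of degree $r-2$. A Bezout computation on $C$ forces every such $F$ to contain $C$: otherwise $F\cdot C=(m+1)(r-2)<(r+1)m$, contradicting the multiplicity requirement (the strict inequality holds precisely for $r\ge 9$ odd). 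Hence $F\in(I_C)_{m+1}$, and one must find $F$ in this linear system with multiplicity $\ge m$ at each $\bar P_i$ in the normal directions to $C$. A naive parameter count yields virtual dimension zero, so a cohomological excess must be extracted. I would use the explicit catalecticant presentation of $I_C$ together with the uniform splitting
\[
N_{C/\PP^{r-2}}\;\cong\;\cO_{\PP^1}(r)^{\oplus(r-3)}
\]
of the normal bundle of $C\cong\PP^1$ to verify, by a $\mathrm{PGL}(2)$-equivariant count of sections, that the restriction map
\[
(I_C)_{m+1}\;\longrightarrow\;\bigoplus_{i=1}^{r+1}\cO_{\bar P_i}/\mathfrak m_{\bar P_i}^{\,m}
\]
has non-trivial kernel.

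\emph{Step 4 (Conclusion).} Any such $F$ witnesses $\dim[R/(I,L)]_{m+1}\ge 1$, so the multiplication map fails to be surjective in degree $m$; as $\dim[R/I]_m>\dim[R/I]_{m+1}$ it is also not injective. Thus $R/I$ fails WLP in degree $m$, which, combined with the even case \cite[Theorem 6.1]{mmn}, settles Conjecture~\ref{conj1}.

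The principal obstacle is Step 3. After the Bezout reduction to $(I_C)_{m+1}$, the task is genuinely cohomological: the virtual dimension is zero, so a positive-dimensional linear system has to be detected from the precise interaction of the multiplicity-$m$ conditions with the normal bundle of $C$. The odd/even dichotomy of $r$ is precisely where this excess first appears: for even $r$ the positive dimension is already visible from the parameter count after the RNC reduction, whereas for odd $r$ an additional geometric input along $C$ is indispensable.
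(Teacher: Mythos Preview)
Your plan has a genuine gap in Step~2: you have chosen the wrong critical degree, and as a result Step~3 is aimed at a target that is in fact empty.

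Write $r=2m+1$ with $m\ge 4$. You want to show that $\times L\colon[R/I]_m\to[R/I]_{m+1}$ fails surjectivity, i.e.\ that $\dim_K[R/(I,L)]_{m+1}\ge 1$. But $R/(I,L)$ is a quotient of $\bar R=K[x_1,\dots,x_{r-1}]$ by $r+1=2m+2$ squares of general linear forms in $2m$ variables. Already the almost complete intersection $(\bar\ell_1^2,\dots,\bar\ell_{2m+1}^2)$ in $2m$ variables has socle degree $m$ (this is the even case of Proposition~\ref{acm}(2) together with Lemma~\ref{35}); adding one more square can only shrink the quotient. Hence $[R/(I,L)]_{m+1}=0$, the multiplication map from degree $m$ to $m+1$ \emph{is} surjective, and there is no unexpected hypersurface to exhibit. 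Your Bezout reduction in Step~3 is formally correct but vacuous: it would force any such $F$ to contain the rational normal curve, but no such $F$ exists. The assertion in Step~2 that ``in every lower degree the Hilbert values match those required by WLP'' is exactly where the error enters: the Hilbert function being strictly increasing up to degree $m$ only says what maximal rank \emph{would} mean there (injectivity), not that it actually holds.

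The paper instead proves failure one step earlier, at the map $\times L\colon[R/I]_{m-1}\to[R/I]_m$. Here $h_{m-1}<h_m$, so maximal rank means injectivity, and failure is equivalent to
\[
\dim_K[R/(I,L)]_m \;>\; h_m-h_{m-1}.
\]
The key external input is the Sturmfels--Xu formula \cite[Corollaries~7.3 and~7.4]{sx}, which gives $\dim_K[R/(I,L)]_m=2^m$ exactly; the proof then reduces to the elementary binomial inequality
\[
2^m \;>\; \binom{2m+1}{m}-\binom{2m+1}{m-2}\;-\;\Bigl(\binom{2m+1}{m-1}-\binom{2m+1}{m-3}\Bigr),
\]
which is checked directly for $m=4,5$ and is obvious for $m\ge 6$ since the right-hand side becomes nonpositive. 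So the fat-point and rational-normal-curve machinery you propose is not needed: once the correct degree is identified, the argument is a closed-form dimension count plus a binomial estimate.
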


\begin{conjecture} (\cite{HSS}, Conjecture 1.2) For $I=(\ell _1^t,\dots ,\ell_n^t)\subset K[x_1,\dots ,x_r]$ with $\ell _i$ general linear forms and $n\ge r+1\ge 5$, $A=K[x_1,\dots ,x_r]/I$ fails WLP for $t\gg 0$.
\end{conjecture}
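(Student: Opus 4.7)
The plan is to recast the WLP question for $A = R/(\ell_1^t,\dots,\ell_n^t)$ through the Macaulay--Emsalem--Iarrobino apolarity dictionary. For every degree $d \ge t$, apolarity yields
$$\dim [A]_d \;=\; \dim H^0\bigl(\PP^{r-1}, \shI_{Z_d}(d)\bigr), \qquad Z_d := (d-t+1)(P_1+\cdots+P_n),$$
where $P_1,\dots,P_n$ are general points in $\PP^{r-1}$ dual to $\ell_1,\dots,\ell_n$. The same dictionary, applied in one fewer variables, computes $\dim [A/LA]_d$ in terms of $n$ general fat points of the same multiplicity in a general hyperplane $\PP^{r-2}\subset\PP^{r-1}$, for $L$ a general linear form. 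Failure of WLP in some degree $d$ is then equivalent to the strict inequality
$$\dim [A/LA]_d \;>\; \max\bigl\{0,\; \dim [A]_d - \dim [A]_{d-1}\bigr\}.$$

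To locate a candidate degree for $t\gg 0$, I would parametrize $d=t+s$ so that the multiplicity $m=s+1$ is decoupled from $t$. The asymptotic behavior of these fat point systems (both in $\PP^{r-1}$ and in $\PP^{r-2}$) is governed, as $m\to\infty$, by Waldschmidt-type constants of $n$ general points, whose values in ambient dimension $r-1$ and in the restricted dimension $r-2$ generally differ when $n\ge r+1$. The plan is to exploit this mismatch: for $t$ large, one can choose $d_0=d_0(r,n,t)$ in a narrow window where the hyperplane-restricted scheme has already passed the regularity threshold of its linear system (so its Hilbert function agrees with the Hilbert-polynomial prediction) while the ambient scheme has not, and show that this discrepancy forces the strict inequality above. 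The role of the large parameter $t$ is thus to magnify an asymptotic discrepancy into a genuine failure of maximal rank at $d_0$; a reasonable first guess is $d_0 \approx t + \alpha(r,n)\,t$ for an explicit constant $\alpha(r,n)>0$.

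The main obstacle is that the Hilbert functions of $n$ general fat points in $\PP^{r-1}$ for $r\ge 4$ are not fully known--the Fr\"oberg--Iarrobino expected value is conjectural in wide ranges--so the two sides of the WLP inequality cannot be computed directly. My plan to bypass this is twofold. First, specialise the points $P_1,\dots,P_n$ to a configuration where exact values are available, such as a star configuration or $n$ points on a rational normal curve, and transfer any strict inequality back to the general case via upper semicontinuity of Hilbert functions. Second, exploit the asymptotic nature of the statement by replacing exact Hilbert function values with known linear regularity bounds coming from Waldschmidt constants and Harbourne--Huneke symbolic power containments; these are sharp enough to control the defect $\dim[A/LA]_{d_0}-\max\{0,\,\dim[A]_{d_0}-\dim[A]_{d_0-1}\}$ once $t$ is sufficiently large. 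The genuinely hard part is making this defect strictly positive \emph{uniformly} in $t\gg 0$, rather than verifying failure of WLP case by case along a sparse sequence of exponents $t$, and will likely require a combinatorial identity relating the generating functions of the two fat point schemes in $\PP^{r-1}$ and $\PP^{r-2}$.
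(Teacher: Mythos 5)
This statement is not proved in the paper at all: it is Conjecture 1.2 of \cite{HSS}, quoted here only as one of three open conjectures motivating the work, and the paper's actual theorem concerns only the case $t=2$, $n=r+1$. So there is no ``paper proof'' to compare against, and your text must be judged as a standalone argument. As such it is a strategy outline, not a proof. You yourself flag the two decisive gaps: (a) the Hilbert functions of $n$ general fat points in $\PP^{r-1}$, $r\ge 4$, are not known in the relevant range (the Fr\"oberg--Iarrobino prediction is conjectural), so neither side of the inequality $\dim [A/LA]_{d_0} > \max\{0,\dim[A]_{d_0}-\dim[A]_{d_0-1}\}$ can be evaluated; and (b) the existence of the ``window'' $d_0\approx t+\alpha(r,n)t$ where the two Waldschmidt-type asymptotics are claimed to diverge is asserted, not established, and the uniformity in $t$ is explicitly left open. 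A plan whose two central steps are both unproven is a gap, not a proof.

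Two further points would need repair even to make the outline viable. First, the semicontinuity transfer goes the wrong way for half of what you need: specializing the points (to a star configuration, a rational normal curve, etc.) can only increase $h^0$ of the linear system, so it yields an \emph{upper} bound for the general value of $\dim[A]_{d}$; but your target inequality also requires a \emph{lower} bound on $\dim[A/LA]_{d_0}$ and on $\dim[A]_{d_0-1}$, which specialization cannot provide --- you would need the expected-dimension lower bound or an exact computation there, and for the quotient $A/LA$ the relevant result (as in \cite{mmn}, via \cite{sx}) involves fat points in $\PP^{r-2}$ with multiplicities shifted relative to those in $\PP^{r-1}$, not simply the same scheme cut by a hyperplane. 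Second, the dictionary $\dim[A]_d=\dim H^0(\PP^{r-1},\shI_{Z_d}(d))$ as you wrote it is off: apolarity identifies $\dim[A]_d$ with the \emph{codimension} of $[I]_d$ in $[R]_d$, equivalently $\dim[R]_d-\dim[\shI_{Z_d}]_d$ with $Z_d=(d-t+1)_+(P_1+\cdots+P_n)$; the sign conventions matter when you subtract consecutive degrees. For contrast, the paper's successful handling of the $t=2$ case avoids all asymptotic fat-point questions by an exact linkage computation (Proposition \ref{acm}) pinning down $\dim[R/I]_q$ and $\dim[R/I]_{q-1}$, plus the closed form $\dim[R/(I,\ell)]_q=2^q$ from \cite{sx}; nothing of comparable precision is currently available for general $t$, which is exactly why the statement remains a conjecture.
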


\begin{conjecture} (\cite{mmn}, Conjecture 6.6) Let $R = K[x_1,\dots,x_{2n+1}]$, where $n \geq 4$.
Let $\ell \in R$ be a general linear form, and let $I = \langle x_1^{d},
\cdots ,x_{2n+1}^d,\ell ^{d} \rangle$.
Then the ring $R/I$ fails the WLP if and only if $d>1$.  Furthermore, if $n=3$ then $R/I$ fails the WLP when $d=3$.
\end{conjecture}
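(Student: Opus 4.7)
The approach is to combine the inverse system dictionary of \cite{mmn} with the fact, due to Stanley and Watanabe (Proposition~\ref{WLPSLPci}), that the monomial complete intersection $A:=R/(x_1^d,\dots,x_{2n+1}^d)$ has the SLP. Since $R/I = A/(\bar\ell^d)$, the maximal rank of $\times\bar\ell^d\colon A_{j-d}\to A_j$ forces the explicit Hilbert function
$$H_{R/I}(j) \;=\; H_A(j)-\min\{H_A(j-d),\,H_A(j)\}.$$
The WLP question is then equivalent to asking, for a general linear form $L$, whether the Hilbert function of
$$R/(I+L)\;\cong\; K[y_1,\dots,y_{2n}]\big/\bigl((\ell'_1)^d,\dots,(\ell'_{2n+2})^d\bigr)$$
(with $\ell'_1,\dots,\ell'_{2n+2}$ the images of the $x_i$ and $\ell$, hence $2n+2$ general linear forms in $2n$ variables) exceeds $\max\{H_{R/I}(j)-H_{R/I}(j-1),\,0\}$ in some degree $j$. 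By the inverse system dictionary this Hilbert function equals the dimension of the linear system $\cL_{2n-1}(j;\,(j-d+1)Q_1,\dots,(j-d+1)Q_{2n+2})$ of hypersurfaces of degree $j$ in $\PP^{2n-1}$ passing through $2n+2$ general points with multiplicity at least $j-d+1$.

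The case $d=1$ is trivial since then $I=(x_1,\dots,x_{2n+1})$ and $R/I=K$. For $d=2$ and $n\ge 4$ we have $r:=2n+1\ge 9$ odd, and $I=(x_1^2,\dots,x_{2n+1}^2,\ell^2)$ is precisely an ideal of $r+1$ general squared linear forms, whose failure of WLP is the content of Conjecture~\ref{conj1} for odd $r$ established as the main theorem of this paper.

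For $d\ge 3$ and $n\ge 4$, the plan is to identify the critical degree $j^\ast$ at which the multiplication map fails maximal rank. Because $A$ has Hilbert function symmetric about $(2n+1)(d-1)/2$, while the extra generator $\bar\ell^d$ shifts the shape of $H_{R/I}$, one expects $j^\ast$ to lie in the plateau of $H_{R/I}$. Once $j^\ast$ is pinned down, the goal is to exhibit a hypersurface in $\cL_{2n-1}(j^\ast;(j^\ast-d+1)Q_i)$ beyond the expected count. The natural construction uses the $\binom{2n+2}{2}$ quadric hypersurfaces $F_{ik}$ through pairs $\{Q_i,Q_k\}$ of the base points (the geometric objects responsible for the $d=2$ obstruction), together with hyperplane components, and one verifies that an appropriate product has the required multiplicity profile. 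This should support an induction on $d$ reducing each case to the base case $d=2$.

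The principal obstacle is controlling the dimension count for $d\ge 3$: one must verify that the unexpected hypersurface is genuinely special, not merely a specialization of the generic linear system. For the outlier case $n=3$, $d=3$ (where $r=7$ is excluded from Conjecture~\ref{conj1}, so the $d=2$ machinery is unavailable), a direct analysis is needed. Here $H_{R/I}$ can be computed explicitly from SLP of $A=K[x_1,\dots,x_7]/(x_i^3)$, giving the sequence $(1,7,28,76,154,238,280,232,91)$; the critical degree is expected to be $j^\ast=6$ or $7$, and the failure of maximal rank is certified by producing a special hypersurface in $\cL_5(j^\ast;(j^\ast-2)Q_i)$ through $8$ general points of $\PP^5$. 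This case is sufficiently small to confirm by explicit dimension counting, providing a conceptual anchor for the general inductive argument.
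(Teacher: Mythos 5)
The first thing to note is that the statement you are proving is not a theorem of this paper: it is \cite{mmn}, Conjecture 6.6, quoted in Section 2 as one of three \emph{open} conjectures, and the paper supplies no proof of it. What the paper actually proves (Theorem \ref{main}) is Conjecture \ref{conj1}, i.e.\ the $d=2$ statement for $r+1$ \emph{general} squares. So there is no proof in the paper to compare yours against, and your text has to stand on its own.

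As a standalone argument it is not yet a proof. The framework you set up --- computing $H_{R/I}$ from the SLP of the monomial complete intersection (Proposition \ref{WLPSLPci}), restricting to a general hyperplane, and translating $\dim_K[R/(I,L)]_j$ into the dimension of a linear system of fat points via inverse systems --- is exactly the machinery of \cite{mmn} and of Section 3 here, and it is the right starting point. But for $d\ge 3$ every substantive step is deferred: you do not identify the critical degree $j^\ast$, you do not construct the unexpected hypersurface or verify its multiplicity profile, and the proposed ``induction on $d$'' is never formulated (nothing explains how a multiplicity-$(j-d+1)$ condition at $2n+2$ points reduces to the $d=2$ situation). These are precisely the hard points --- the paper itself stresses that even for $d=2$ ``the key point is to determine the degree where the WLP fails'' --- so leaving them as intentions leaves the conjecture unproved; the same applies to the $n=3$, $d=3$ case, where you compute the $h$-vector correctly but only announce that a special hypersurface ``is expected.'' Two smaller issues: (i) for $d=2$ you assert that $(x_1^2,\dots,x_{2n+1}^2,\ell^2)$ ``is precisely'' an ideal of $2n+2$ general squares; it is a \emph{specialization} of one, and since WLP is an open condition, failure for the general member does not automatically transfer. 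You need Lemma \ref{35} (equality of the Hilbert functions) together with semicontinuity of $\dim_K[R/(I,L)]_q$ --- which fortunately pushes in the right direction here, because the failure detected in Theorem \ref{main} is that this dimension is too \emph{large} --- to conclude for the special ideal. (ii) The restricted forms $\ell_1',\dots,\ell_{2n+2}'$ in $K[y_1,\dots,y_{2n}]$ are not $2n+2$ general linear forms (only the images of $\ell$ and the choice of the hyperplane are general), so the appeal to results on general fat points requires the more careful bookkeeping carried out in \cite{mmn} and \cite{sx}.
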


This paper will be devoted to proving Conjecture \ref{conj1}. More precisely, we will prove that
for almost complete intersection ideals  generated by squares of general linear forms we have the following full characterization which solves Conjecture \ref{conj1}.

\begin{theorem}\label{main} Set $A=K[x_1,\dots ,x_r]/(\ell_1^2,\dots ,\ell _{r+1}^2)$ where $\ell _i$ are general linear forms.
We have:
\begin{itemize}
\item[(1)] If $2\le r \le 5$ or $r=7$ then $A$  has the WLP.
\item[2)] If $r=6$ or $r\ge 8$ then $A$ fails WLP.
\end{itemize}
\end{theorem}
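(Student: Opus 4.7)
The approach is to study the multiplication map $\mu_L\colon [A]_{d-1}\to [A]_d$ by a general linear form via the exact sequence
\[
0 \to \ker\mu_L \to [A]_{d-1} \xrightarrow{\mu_L} [A]_d \to [R'/I']_d \to 0,
\]
where $R' = K[x_1,\dots,x_{r-1}] \cong R/(L)$ and $I' = (\bar\ell_1^2,\dots,\bar\ell_{r+1}^2)$ is the image of $I$ in $R'$. By the inverse system dictionary (Emsalem--Iarrobino), as already exploited in \cite{mmn}, $\dim [A]_d$ equals the dimension of the linear system of degree-$d$ hypersurfaces in $\PP^{r-1}$ through $r+1$ general points of multiplicity $d-1$, and $\dim [R'/I']_d$ equals the analogous dimension in $\PP^{r-2}$. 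WLP fails in degree $d$ exactly when both $\ker\mu_L$ and $[R'/I']_d$ are nonzero, and it holds when for every $d$ at least one of these spaces vanishes.

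\textbf{Small and known cases.} For $r\le 3$ I would invoke Proposition \ref{cor:ss-result}. For $r = 4, 5$ the $h$-vector of $A$ is very short, and WLP can be verified degree by degree by computing the Hilbert function of $A$ and of $R'/I'$ using Alexander--Hirschowitz to evaluate the relevant fat-point dimensions. For $r = 6$ and every even $r \ge 8$ the conclusion is already contained in \cite{mmn}, Theorem 6.1.

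\textbf{Failure for odd $r \ge 9$.} Write $r = 2n+1$ and identify the critical degree $d = d(r)$, whose natural candidate is $d = n+1$, near the middle of the Hilbert function of $A$. The task is to compute $\dim [A]_{d-1}$, $\dim [A]_d$ and $\dim [R'/I']_d$ via the fat-point dictionary, and to show
\[
\dim [R'/I']_d > \max\{0,\; \dim [A]_d - \dim[A]_{d-1}\},
\]
which forces both $\ker\mu_L$ and $[R'/I']_d$ to be nonzero and hence forces WLP to fail in degree $d$. The key geometric input is to prove that the linear system of $r+1$ general points of multiplicity $d-1$ in $\PP^{r-2}$ is special -- i.e.\ has dimension strictly larger than the virtual one -- by exhibiting explicit hypersurfaces of the required vanishing, for instance products of hyperplanes spanned by carefully chosen subsets of the $r+1$ points.

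\textbf{WLP for $r = 7$.} This exceptional case is the most delicate part and will be the main obstacle. The $h$-vector of $A$ is short, so only a handful of degrees have to be checked, but in each of them one must show that the fat-point systems in $\PP^6$ (for $[A]_d$) and in $\PP^5$ (for $[R'/I']_d$) attain exactly their expected dimensions and that the resulting arithmetic yields maximal rank for $\mu_L$. Unlike the failure cases, where a single special degree suffices, here a uniform non-speciality statement across all degrees is required; moreover, the parameters for $r = 7$ lie on the boundary of the special/non-special regime, differing by only a small margin from those for $r = 9$, so the discrimination must be sharp. I expect the positive statement to be established either by an explicit Horace-type degeneration argument for the relevant fat-point systems in $\PP^5$ and $\PP^6$, or by exploiting the level structure and symmetry of the Hilbert function of $A$ to trade injectivity for surjectivity across the socle.
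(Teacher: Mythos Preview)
Your high–level strategy—the exact sequence for $\times L$ and the reduction of WLP failure to the inequality $\dim[R'/I']_d>\max\{0,\dim[A]_d-\dim[A]_{d-1}\}$—matches the paper exactly. The execution, however, is quite different, and in one place your plan drifts.

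\textbf{Different route to $H_A$.} The paper does \emph{not} compute $\dim[A]_d$ through fat points. Instead it builds an explicit Gorenstein algebra $G=R/\Ann(g)$ with $g=\sum X_{i_1}\cdots X_{i_{r-2}}$, shows $G$ is relatively compressed with respect to $\fa=(x_1^2,\dots,x_r^2)$, and links: $J=(\fa:\Ann(g))=(x_1^2,\dots,x_r^2,(x_1+\cdots+x_r)^2)$. This yields $H_A$ immediately from $H_{R/\fa}$ and $H_G$, and a semicontinuity sandwich (Lemma~\ref{35}) shows the same Hilbert function holds for general $\ell_i$. Equivalently, one uses the SLP of the monomial complete intersection (Proposition~\ref{WLPSLPci}). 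Your fat–point route would require a separate non-speciality argument in $\PP^{r-1}$ that the paper avoids entirely.

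\textbf{Different input for the cokernel.} For $R'/I'$ (which is $2q+2$ squares of general linear forms in $2q$ variables) the paper invokes Sturmfels--Xu \cite{sx}, Corollaries~7.3 and~7.4, to get the exact value $\dim_K[R'/I']_q=2^q$. With $H_A$ already in hand, the failure of WLP reduces to the elementary binomial inequality
\[
\binom{2q+1}{q}-\binom{2q+1}{q-2}\;<\;2^q+\binom{2q+1}{q-1}-\binom{2q+1}{q-3},
\]
checked directly for $q\ge 4$. Your plan to ``exhibit explicit hypersurfaces'' to prove speciality is in spirit what underlies \cite{sx}, but you would be rederiving a nontrivial result rather than citing it.

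\textbf{A concrete slip: the critical degree.} You propose $d=n+1$ for $r=2n+1$; the paper uses $d=n$ (i.e.\ the map $[A]_{q-1}\to[A]_q$). This matters: the Sturmfels--Xu formula $2^q$ is for degree $q$, and at degree $q+1$ the expected dimension of $[R'/I']_{q+1}$ is already $0$, so your proposed degree is not the one where the argument goes through cleanly. The ``middle'' of $H_A$ is between degrees $q$ and $q+1$, and it is the \emph{lower} of the two where injectivity fails.

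\textbf{The case $r=7$ (and $r=4,5$).} You anticipate a delicate theoretical argument; the paper simply verifies WLP with Macaulay2. Given that $H_A$ is extremely short in these cases, a computer check is both legitimate and vastly simpler than a Horace degeneration.
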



\section{Proof of the main Theorem}

This section is entirely devoted to prove Theorem \ref{main}. In order to make the proof self-contained we start with a series of technical lemmas and propositions which have as a goal to compute the Hilbert function of an almost complete intersection ideal generated by the quadratic powers of general linear forms.

\begin{lemma}\label{HF_ci} Let $\fa =(x_1^2,x_2^2,\dots ,x_r^2)$. Then the Hilbert function of $A=K[x_1,\dots ,x_r]/\fa $ is
$$H_A(j)=\begin{cases} {r\choose j} & \text{ if } 0\le j \le r, \\ 0 & \text{ otherwise. }\end{cases}$$
\end{lemma}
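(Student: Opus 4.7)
The plan is to observe that $\mathfrak{a} = (x_1^2, \dots, x_r^2)$ is generated by a regular sequence of forms of degree $2$ in $R = K[x_1,\dots,x_r]$, so $A = R/\mathfrak{a}$ is a graded complete intersection. The standard Koszul computation then gives the Hilbert series
\[
HS_A(t) \;=\; \prod_{i=1}^{r} \frac{1-t^{2}}{1-t} \;=\; (1+t)^{r},
\]
and extracting the coefficient of $t^{j}$ yields $H_A(j)=\binom{r}{j}$ for $0\le j\le r$ and $0$ otherwise. That alone proves the lemma.

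Because the paper is aimed at being self-contained, I would actually present the more elementary combinatorial argument, which avoids quoting the Koszul/complete intersection Hilbert series formula. The plan is to exhibit an explicit $K$-basis of $A$. First I would note that the monomials form a $K$-basis of $R$, and that in the quotient $A$ every monomial containing a factor $x_i^2$ is zero. Therefore the images of the squarefree monomials $x_{i_1}x_{i_2}\cdots x_{i_k}$ (with $1\le i_1<\cdots<i_k\le r$) span $A$ as a $K$-vector space.

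Linear independence of these images would be the one thing requiring a short justification: I would argue that since $\mathfrak{a}$ is a monomial ideal, a $K$-linear relation among squarefree monomials modulo $\mathfrak{a}$ lifts to an expression in $\mathfrak{a}$, but every monomial appearing in an element of $\mathfrak{a}$ is divisible by some $x_i^2$, hence is not squarefree. So the squarefree monomials are $K$-linearly independent in $A$ and thus form a basis. Counting squarefree monomials of degree $j$ gives $\binom{r}{j}$ for $0\le j\le r$ and $0$ otherwise, as claimed.

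There is no real obstacle here; the only subtlety worth flagging is the linear independence step, which is immediate from the monomial nature of $\mathfrak{a}$. Since the result is entirely standard, either the Hilbert series proof or the basis proof suffices, and I would choose the basis argument for transparency.
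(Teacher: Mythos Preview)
Your proposal is correct and your basis argument is essentially the paper's own proof: it simply writes down the sets $B_j=\{x_{i_1}\cdots x_{i_j}\mid 1\le i_1<\cdots<i_j\le r\}$ as bases of $[A]_j$ and concludes. You actually supply more detail (the linear-independence justification and the alternative Hilbert-series computation) than the paper, which treats the basis claim as self-evident.
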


\begin{proof} To compute the Hilbert function $H_A(t)$ of $A$, it is enough to describe bases $B_j$ of $[A]_j$. We choose the residue classes of the elements in the following sets
$$B_j=\{ x_{i_1}\cdot \dots \cdot x_{i_j} \mid 1\le i_1< \dots < i_j \le r \} \\ \text{ for } \\  1\le j \le r, $$
and this concludes the proof.
\end{proof}

Given the polynomial ring $R=K[x_1, \dots ,x_r]$ we will consider its Macaulay-Matlis inverse system $E=K[X_1,\dots ,X_r]$ which means that $E$ is a graded $R$-module and $x_i$ acts as $\frac{\partial }{\partial X_i}$.

\begin{proposition} \label{propietats_gor} Set $g:=\sum _{1\le i_1<\cdots <i_{r-2}\le r}X_{i_1}X_{i_2}\cdot \dots \cdot X_{i_{r-2}}$ and let $G:=K[x_1, \dots , x_r]/\Ann(g)$ be the associated artinian Gorenstein $K$-algebra. The following hold:
\begin{itemize}
\item[(1)] The Hilbert function of $G$ is given by
$$H_{G}(t)=\begin{cases} 0  & \text{ if } t<0 \text{ or } t>r-2, \\
{r\choose t} & \text{if } 0\le t \le \frac{r-2}{2}, \\
{r\choose r-2-t} & \text{ if } \frac{r-2}{2}<t\le r-2.
\end{cases}$$
\item[(2)] $G$ has socle degree $r-2$ and it is relatively compressed with respect to the  complete intersection monomial ideal $\fa=(x_1^2,x_2^2,\dots ,x_r^2)$.

    \item[(3)] For $r\ge 3$ odd, we define
     $$S:=\{x_1^2,x_2^2,\cdots ,x_r^2, (x_{i_1}-x_{i_2}) (x_{i_3}-x_{i_4})\cdot \dots \cdot (x_{i_{r-2}}-x_{i_{r-1}}) \}$$  where $1\le i_j\le r$ and $i_j\ne i_{s}.$ We have $\langle S \rangle \subset \Ann(g)$.
        \item[(4)] For $r\ge 4$ even, we define   $$S:=\{x_1^2,x_2^2,\cdots ,x_r^2, (x_{i_1}-x_{i_2}) (x_{i_3}-x_{i_4})\cdot \dots \cdot (x_{i_{r-3}}-x_{i_{r-2}})x_{i_{r-1}}´\} $$  where $1\le i_j\le r$ and $i_j\ne i_{s}.$ We have $\langle S \rangle \subset \Ann(g)$.
\end{itemize}
\end{proposition}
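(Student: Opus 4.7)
My plan is to handle the four claims in order. Parts (1) and (2) will both come down to a single linear-algebra claim about an inclusion matrix, while parts (3) and (4) will both come down to a single symmetric-function identity.

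The starting observation is that $g$ is a sum of squarefree monomials of degree $r-2$, hence linear in each $X_i$, so $x_i^2\cdot g=0$ for all $i$ and $\fa=(x_1^2,\ldots,x_r^2)\subset \Ann(g)$. Combined with $\deg g=r-2$, this already gives: $G$ is Gorenstein of socle degree $r-2$, $G$ is a quotient of $R/\fa$, and $H_G(t)=H_G(r-2-t)$ by Macaulay--Matlis duality. Consequently,
$$H_G(t)\;\le\;\min\Bigl\{\binom{r}{t},\binom{r}{r-2-t}\Bigr\},$$
matching the upper bound of Lemma \ref{first bound}.

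For the reverse inequality in part (1) I would, by the duality, focus on $0\le t\le (r-2)/2$ (where the minimum is $\binom{r}{t}$) and prove that the multiplication map $\mu_t\colon [R/\fa]_t\longrightarrow E_{r-2-t}$, $h\mapsto h\cdot g$, is injective. For $S\in\binom{[r]}{t}$ a direct computation gives
$$x_S\cdot g\;=\;e_{r-2-t}\bigl(X_i:i\notin S\bigr),$$
a sum of squarefree monomials $X_U$ with $U\subset [r]\setminus S$ and $|U|=r-2-t$. Setting $V:=[r]\setminus U$ (so $|V|=t+2$), the coefficient matrix of these $\mu_t$-images in the squarefree basis $\{X_U\}$ is exactly the inclusion matrix of $t$-subsets inside $(t+2)$-subsets of $[r]$. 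For $t+(t+2)\le r$, i.e.\ $t\le (r-2)/2$, this matrix has full row rank $\binom{r}{t}$ by Gottlieb's classical theorem; a short self-contained proof decomposes both permutation modules on $t$- and $(t+2)$-subsets into $S_r$-Specht modules $S^{(r-j,j)}$ ($0\le j\le t$) and invokes Schur's lemma on common isotypic components. Verifying this rank statement is the main technical step. Part (2) is then formal: the Hilbert function of $G$ saturates the Lemma \ref{first bound} bound (with $t=1$), so by Definition \ref{def-compr}, $G$ is relatively compressed with respect to $\fa$.

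For parts (3) and (4) the engine is the identity
$$(x_a-x_b)\cdot e_{r-2}(X_1,\ldots,X_r)\;=\;(X_b-X_a)\cdot e_{r-4}\bigl(X_i:i\notin\{a,b\}\bigr),$$
valid for distinct $a,b\in[r]$, obtained by splitting each squarefree monomial of $g$ according to whether it contains $X_a$, $X_b$, both, or neither (the ``both'' contributions produce the prefactor $X_b-X_a$ and the ``neither'' case is absent since $|S|=r-2$). Iterating with disjoint pairs of indices yields, for distinct $i_1,\ldots,i_{2k}$,
$$\prod_{j=1}^{k}(x_{i_{2j-1}}-x_{i_{2j}})\cdot g\;=\;\prod_{j=1}^{k}(X_{i_{2j}}-X_{i_{2j-1}})\cdot e_{r-2-2k}\bigl(X_i:i\notin\{i_1,\ldots,i_{2k}\}\bigr),$$
since each new $(x_{i_{2k-1}}-x_{i_{2k}})$ commutes past the accumulated $X$-factors (disjoint indices) and acts on the smaller top-degree elementary symmetric polynomial. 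For $r$ odd and $k=(r-1)/2$, the right-hand $e$ has degree $r-2-2k=-1$ and vanishes, giving part (3). For $r$ even and $k=(r-2)/2$, the degree is $0$ so $e_0=1$ and the right-hand side collapses to $\prod_{j}(X_{i_{2j}}-X_{i_{2j-1}})$, a polynomial in the indices $\{i_1,\ldots,i_{r-2}\}$ only; multiplying by $x_{i_{r-1}}$, where $i_{r-1}$ is one of the two unused indices, annihilates this product and gives part (4).
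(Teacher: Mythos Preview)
Your argument is correct, and in several places cleaner than the paper's. The differences are worth noting.

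For part (1) the paper simply writes $H_G(j)=\dim_K\langle\partial_S g:|S|=j\rangle$ and asserts this equals the desired binomial, without justifying that the partials are linearly independent (or span). You identify the catalecticant matrix explicitly as the inclusion matrix of $t$-subsets in $(t+2)$-subsets via the complement bijection $U\leftrightarrow [r]\setminus U$, and then invoke Gottlieb's theorem (or the Specht-module decomposition of the permutation modules $M^{(r-t,t)}$) to get full row rank for $2t+2\le r$. This is more work but actually proves the claim; the paper's version is essentially a statement rather than a proof.

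For parts (3) and (4) the paper runs an explicit induction on $q$ (where $r=2q+1$), peeling off the last pair $(x_{r-2}-x_{r-1})$ and decomposing $g$ by hand into pieces $g_1,g_2,g_1^1,g_1^2,\ldots$ to which the inductive hypothesis applies. Your approach packages the same recursion into the single identity
\[
(x_a-x_b)\cdot e_{n-2}(X_1,\ldots,X_n)\;=\;(X_b-X_a)\cdot e_{n-4}\bigl(X_i:i\notin\{a,b\}\bigr),
\]
and then iterates, using that $\partial_{X_c}$ commutes past $X_a,X_b$ when the indices are disjoint. This yields both the odd and even cases in one stroke (reading off $e_{-1}=0$ for $r$ odd and $e_0=1$ for $r$ even), and makes transparent why the extra factor $x_{i_{r-1}}$ is needed in the even case: after $k=(r-2)/2$ steps the surviving polynomial involves only the used indices $i_1,\ldots,i_{r-2}$, so differentiating in an unused variable kills it. The paper's induction and your identity are equivalent in content, but your formulation is shorter and treats (3) and (4) uniformly rather than leaving (4) to the reader.
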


\begin{proof} (1) Since $G$ is an artinian Gorenstein $K$-algebra, the Hilbert function of $G$ is symmetric, i.e. $H_G(i)=H_G(r-2-i)$ for $0\le i\le r-2$ and we only have to compute $H_G(j)$ for $ \frac{r-2}{2}\le j \le r-2$. By Macaulay-Matlis duality for any $\frac{r-2}{2}\le j \le r-2$, we have:
$$\begin{array}{rcl} H_G(j) & = &
\dim _K\langle \frac{\partial ^{j} g}{\partial X_{1}^{a_1} \cdots \partial X_{r}^{a_r} } \mid 0\le a_i \text{ and } a_1+\dots +a_r=j    \rangle  \\
& = & \dim _K\langle \frac{\partial ^{j} g}{\partial X_{i_1}\cdots \partial X_{i_j} } \mid 0\le i_1< \dots <i_j\le r    \rangle \\
& = & {r\choose j}.\end{array}$$

\vskip 2mm
(2) By definition $A$ has socle degree $r-2$. Using (1) and Lemma \ref{HF_ci} we check that
$$
H_G(i) = \min \{ \dim [R/\fa ]_i, \dim [R/\fa ]_{r-2-i} \}
$$
and we deduce from Lemma \ref{first bound} that $G$ is relatively compressed with respect to the  complete intersection monomial ideal $\fa$.

\vskip 2mm
(3) Set $C=K[x_1,\dots ,x_r]/\fa$. By item (2), $H_G(j)=H_C(j)$ for all $0\le j \le \frac{r-2}{2}$ and we clearly have $\fa \subset \Ann(g)$ since $x_i^2 \circ g=\frac{\partial ^2g}{\partial X_i^2}=0$ for all $0\le i \le r$. Therefore, for $r\ge 7$,  the only generators of $\Ann(g)$ of degree $\le \frac{r-2}{2}$ are: $x_1^2,x_2^2,\cdots ,x_r^2$.

\vskip 2mm
\noindent {\bf Claim:} $(x_{i_1}-x_{i_2}) (x_{i_3}-x_{i_4})\cdot \dots \cdot (x_{i_{r-2}}-x_{i_{r-1}})\in \Ann(g)$.

\noindent {\bf Proof of the Claim.} Obviously, it is enough to check one case, namely $(x_{1}-x_{2}) (x_{3}-x_{4})\cdot \dots \cdot (x_{r-2}-x_{r-1})\circ g=0$.  We set $r=2q+1$ and we proceed by induction on $q$.

For $q=1$ and $2$ the claim is true since  we have $(x_1-x_2)\circ (X_1+X_2+X_3)$=0 and  $(x_1-x_2)(x_3-x_4)\circ (X_1X_2X_3+X_1X_2X_4+X_1X_2X_5+X_1X_3X_4+X_1X_3X_5+X_1X_4X_5+X_2X_3X_4+X_2X_3X_5+X_2X_4X_5+X_3X_4X_5)=0$.

Assume $q\ge 3$. We define
$$\begin{array}{rcl} g_1 & := & \sum _{1\le j_1<\dots <j_{r-3}\le r \atop j_i\ne r-2} X_{j_1}\cdot \dots \cdot X_{j_{r-3}} \\ & = & X_r\sum _{1\le j_1<\dots <j_{r-4}\le r-1 \atop j_i\ne r-2} X_{j_1}\cdot \dots \cdot X_{j_{r-4}}+\sum _{1\le j_1<\dots <j_{r-3}\le r-1 \atop j_i\ne r-2} X_{j_1}\cdot \dots \cdot X_{j_{r-3}} \\ & =: & X_rg_1^1+g_1^2\end{array} $$
and, analogously,
$$\begin{array}{rcl} g_2 & := & \sum _{1\le j_1<\dots <j_{r-3}\le r \atop j_i\ne r-1} X_{j_1}\cdot \dots \cdot X_{j_{r-3}} \\ & = &  X_r\sum _{1\le j_1<\dots <j_{r-4}\le r-1 \atop j_i\ne r-1} X_{j_1}\cdot \dots \cdot X_{j_{r-4}}+\sum _{1\le j_1<\dots <j_{r-3}\le r-1 \atop j_i\ne r-1} X_{j_1}\cdot \dots \cdot X_{j_{r-3}}\\ & =: & X_rg_2^1+g_2^2.\end{array} $$

We have:
$$\begin{array}{rcl} (x_{1}-x_{2}) (x_{3}-x_{4})\cdot \dots \cdot (x_{r-2}-x_{r-1})\circ g & = & \\
(x_{1}-x_{2}) \cdot \dots \cdot (x_{r-4}-x_{r-3})x_{r-2} \circ g - (x_{1}-x_{2}) \cdot \dots \cdot (x_{r-4}-x_{r-3})x_{r-1} \circ g & = & \\
(x_{1}-x_{2}) \cdot \dots \cdot (x_{r-4}-x_{r-3}) \circ g_1- (x_{1}-x_{2}) \cdot \dots \cdot (x_{r-4}-x_{r-3}) \circ g_2 & = & \\
(x_{1}-x_{2}) \cdot \dots \cdot (x_{r-4}-x_{r-3}) \circ (X_rg_1^1+g_1^2)- (x_{1}-x_{2}) \cdot \dots \cdot (x_{r-4}-x_{r-3}) \circ (X_rg_2^1+g_2^2).
\end{array}$$

Using the induction hypothesis  we get
$$(x_{1}-x_{2}) \cdot \dots \cdot (x_{r-4}-x_{r-3}) \circ X_rg_1^1 = 0 $$
and
$$(x_{1}-x_{2}) \cdot \dots \cdot (x_{r-4}-x_{r-3}) \circ X_rg_2^1= 0.$$

Therefore, we have
$$\begin{array}{rcl} (x_{1}-x_{2}) (x_{3}-x_{4})\cdot \dots \cdot (x_{r-2}-x_{r-1})\circ g & = & \\
(x_{1}-x_{2}) \cdot \dots \cdot (x_{r-4}-x_{r-3}) \circ g_1^2- (x_{1}-x_{2}) \cdot \dots \cdot (x_{r-4}-x_{r-3}) \circ g_2^2  & = & \\
(x_{1}-x_{2}) \cdot \dots \cdot (x_{r-4}-x_{r-3}) \circ (X_1\cdot \dots \cdot X_{r-3})+X_{r-1}h) - & & \\
(x_{1}-x_{2}) \cdot \dots \cdot (x_{r-4}-x_{r-3}) \circ (X_1\cdot \dots \cdot X_{r-3})+X_{r-2}h)
\end{array}$$

where $$h=\sum _{1\le s_1<\dots <s_{r-4}\le r-3} X_{s_1}\cdot \dots \cdot X_{s_{r-4}}$$
Applying again hypothesis of induction we obtain
$$(x_{1}-x_{2}) \cdot \dots \cdot (x_{r-4}-x_{r-3}) \circ X_{r-1}h= 0 $$
$$(x_{1}-x_{2}) \cdot \dots \cdot (x_{r-4}-x_{r-3}) \circ X_{r-2}h= 0$$

\noindent and, hence, we  get

 $$\begin{array}{rcl} (x_{1}-x_{2}) (x_{3}-x_{4})\cdot \dots \cdot (x_{r-2}-x_{r-1})\circ g & = & \\
(x_{1}-x_{2}) \cdot \dots \cdot (x_{r-4}-x_{r-3}) \circ (X_1\cdot \dots \cdot X_{r-3})+X_{r-1}h) & & \\ - (x_{1}-x_{2}) \cdot \dots \cdot (x_{r-4}-x_{r-3}) \circ (X_1\cdot \dots \cdot X_{r-3})+X_{r-2}h) & = & \\
(x_{1}-x_{2}) \cdot \dots \cdot (x_{r-4}-x_{r-3}) \circ X_1\cdot \dots \cdot X_{r-3} - & & \\(x_{1}-x_{2}) \cdot \dots \cdot (x_{r-4}-x_{r-3}) \circ X_1\cdot \dots \cdot X_{r-3} & = & \\
0.
\end{array}$$

\vskip 2mm
(4) It is analogous to (3) and we leave it to the reader.

\end{proof}

\begin{remark} \rm In this paper we do not need an explicit description of a (minimal) system of generators of $\Ann(g)$. Computer evidences, using Macaulay2, suggest that $S$ is a full system of generators of $\Ann(g)$ and, hence $\Ann(g)$ is generated by $r$ quadrics and ${r-1\choose q-1}$ forms of degree $q$ if $r=2q+1\ge 7$ and by $r$ quadrics and ${r\choose q}-{r\choose q-2}$ forms of degree $q$ if $r=2q\ge 6$.

\end{remark}

\begin{proposition} \label{acm}   Fix an integer $r\ge 3$. Set $g:=\sum _{1\le i_1<\cdots <i_{r-2}\le r}X_{i_1}X_{i_2}\cdot \dots \cdot X_{i_{r-2}}$. Let $G:=K[x_1, \dots , x_r]/\Ann(g)$ be the associated artinian Gorenstein $K$-algebra relatively compressed with respect to the monomial complete intersection  $\fa =(x_1^2,\dots ,x_r^2)$. We define $J=(\fa : \Ann(g))$. We have:
\begin{itemize}
\item[(1)] $J$ is an almost complete intersection artinian ideal generated by $r+1$ forms of degree 2 and with socle degree $$e(J)=\begin{cases}  q+1 & \text{ if } r=2q+1, \\ q  & \text{ if } r=2q.
    \end{cases}$$
\item[(2)] If $r=2q+1$, the Hilbert function of $A:=K[x_1,\dots, x_r]/J$ is given by
$$H_A(t)=\begin{cases} 0 & \text{ if } t<0 \text{ or } t> q+1, \\
  {r\choose t}-{r\choose t-2} & \text{ if } 0\le t \le q, \\
   {r\choose q}-{r\choose q-1} & \text{ if }  t=q+1. \end{cases}$$
   If $r=2q$, we have
   $$H_A(t)=\begin{cases} 0 & \text{ if } t<0 \text{ or } t> q,\\
  {r\choose t}-{r\choose t-2} & \text{ if } 0\le t \le q.
   \end{cases}$$
\item[(3)] $J=(x_1^2,x_2^2,\dots ,x_r^2,(x_1+\dots +x_r)^2)$.
\end{itemize}
\end{proposition}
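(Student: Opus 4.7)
The plan rests on a double-colon argument. Set $L := x_1 + \cdots + x_r$ and $F := X_1 X_2 \cdots X_r$, so that $\fa = \Ann(F)$ and membership in $\fa$ can be tested via the inverse system by $h \in \fa \Leftrightarrow h \circ F = 0$. The central computation is
\[
L^2 \circ F \;=\; \sum_{k=1}^{r} \partial_k\bigl(e_{r-1}(X)\bigr) \;=\; 2\,e_{r-2}(X) \;=\; 2g,
\]
where $e_k(X)$ denotes the $k$-th elementary symmetric polynomial in $X_1,\dots,X_r$; the factor $2$ appears because each $(r-2)$-subset of $[r]$ is the complement of exactly two singletons. From this identity one deduces at once that $\fa : (L^2) = \Ann(g)$: if $h \in \Ann(g)$ then $(hL^2) \circ F = h \circ (2g) = 0$, so $\Ann(g) \subseteq \fa : (L^2)$, and conversely $h \in \fa : (L^2)$ gives $h \circ (2g) = (hL^2) \circ F = 0$, so $h \in \Ann(g)$.

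Now invoke the classical double-colon identity for Gorenstein artinian ideals, which asserts $\fa : (\fa : I) = I$ for every ideal $I \supseteq \fa$. Applying it to $I := \fa + (L^2)$, and using $\fa : (\fa + (L^2)) = (\fa : \fa) \cap (\fa : (L^2)) = \fa : (L^2)$ (since $\fa : \fa = R$), we obtain
\[
\fa + (L^2) \;=\; \fa : (\fa : (L^2)) \;=\; \fa : \Ann(g) \;=\; J.
\]
Since $L^2 \equiv 2\sum_{i<j} x_i x_j \not\equiv 0 \pmod{\fa}$, the new generator is genuine, so $J = (x_1^2,\dots,x_r^2, L^2)$ is minimally generated by $r+1$ quadrics. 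This proves~(3).

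Parts (1) and (2) are then routine. Because $J$ is artinian it has height $r$, and being generated by $r+1$ quadrics it is an almost complete intersection. For the Hilbert function, Proposition~\ref{WLPSLPci} gives the SLP for the monomial complete intersection $R/\fa$, with $L$ as a strong Lefschetz element (classical for monomial CIs in characteristic zero). Hence multiplication by $L^2$ on $R/\fa$ has maximal rank in every degree, which yields
\[
H_{R/J}(j) \;=\; H_{R/(\fa+(L^2))}(j) \;=\; \max\Bigl\{0,\,\binom{r}{j} - \binom{r}{j-2}\Bigr\}.
\]
A short binomial manipulation---using $\binom{r}{q+1}=\binom{r}{q}$ when $r=2q+1$ and $\binom{r}{q+1}=\binom{r}{q-1}$ when $r=2q$---shows that this expression matches the piecewise formulas asserted in~(2); the socle degrees $q+1$ (odd $r$) and $q$ (even $r$) then drop out by inspection.

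The step I expect to be the main obstacle is the bookkeeping around the double-colon identity: one must be careful to apply it to an ideal that genuinely contains $\fa$ and invoke the Gorenstein property in an essential way. A secondary but less substantive nuisance is verifying that the specific linear form $L = x_1 + \cdots + x_r$ (rather than a generic one) is a strong Lefschetz element for $\fa$, but this is a well-known feature of monomial complete intersections in characteristic zero.
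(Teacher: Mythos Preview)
Your proof is correct and takes a genuinely different route from the paper's. The paper argues in the opposite order: it first invokes linkage theory for relatively compressed Gorenstein algebras (via Proposition~\ref{propietats_gor}) to conclude abstractly that $J=(\fa:\Ann(g))$ is an almost complete intersection $\fa+(f)$ for some quadric $f$, obtains the Hilbert function of $R/J$ from the linkage formula $H_{R/J}(t)=H_{R/\fa}(t)-H_G(r-t)$, and only then identifies $f=L^2$ by a bare-hands inductive verification that $h\cdot L^2\in\fa$ for each of the explicit generators $h$ of $\Ann(g)$ produced in Proposition~\ref{propietats_gor}, together with a case analysis showing no other quadric works. Your argument short-circuits all of this: the single identity $L^2\circ(X_1\cdots X_r)=2g$ immediately gives $\fa:(L^2)=\Ann(g)$, and the Gorenstein double-colon identity then yields $J=\fa+(L^2)$ in one stroke; the Hilbert function and socle degree then fall out of the SLP for $R/\fa$ rather than from linkage. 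Your approach is shorter and more conceptual, and in particular avoids any induction; the paper's has the incidental by-product of exhibiting explicit degree-$\lfloor r/2\rfloor$ elements of $\Ann(g)$, though (as the Remark after Proposition~\ref{propietats_gor} concedes) this is not actually needed here. The one point you flag yourself---that the specific form $L=x_1+\cdots+x_r$, not merely a generic one, is a strong Lefschetz element for $R/\fa$---is indeed well known (it is the Lefschetz element in Stanley's original proof) and poses no real obstacle.
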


\begin{proof} (1) Since $G$ is an artinian Gorenstein $K$-algebra relatively compressed with respect to $\fa $ and with socle degree $r-2$ and $J$ is linked to $\Ann (g)$ by means of $\fa $, we  have that $J$ is an almost complete intersection generated by $x_1^2, \dots , x_r^2, f$ where $f$ is a form of degree 2.

According to the proof of Proposition \ref{propietats_gor} (3) and (4), $\Ann(g)$ has $r$ minimal generators of degree 2, at least one minimal generator of degree  $\lfloor \frac{r}{2} \rfloor $ and all the others  (if any) in degree $\ge \lfloor \frac{r}{2} \rfloor $. Therefore, the socle degree of $J$ is
$$e(J)=-\lfloor \frac{r}{2} \rfloor +r=\begin{cases}  q+1 & \text{ if } r=2q+1, \\ q & \text{ if } r=2q
    \end{cases}$$
which proves what we want.

\vskip 2mm
(2) Since the Hilbert function of $K[x_1, \dots , x_r]/\fa$ is known (Lemma \ref{HF_ci}) and of $K[x_1, \dots , x_r]/\Ann(g)$ is also known (Proposition \ref{propietats_gor}
), we can compute the Hilbert function of $J=(\fa :\Ann(g))$ and we have
$$ H_{K[x_1, \dots , x_r]/J}(t)=H_{K[x_1, \dots , x_r]/\fa}(t)-H_G(r-t)$$
and we get what we want.

\vskip 2mm
(3) By the proof of (1) we know that $J=(x_1^2,x_2^2,\dots ,x_r^2,f)$ where $f$ is an homogeneous form of degree 2. Let us determine it. More precisely, let us chech that we can take $f=(x_1+\dots +x_r)^2$. We assume that $r$ is odd, say, $r=2q+1$. Analogous argument works for $r$ even. Set
 $$S:=\{x_1^2,x_2^2,\cdots ,x_r^2, (x_{i_1}-x_{i_2}) (x_{i_3}-x_{i_4})\cdot \dots \cdot (x_{i_{r-2}}-x_{i_{r-1}}) \}$$  where $1\le i_j\le r$ and $i_j\ne i_{s}.$ By Proposition \ref{propietats_gor}, $S$ is part of a system (not necessarily minimal) of generators of $\Ann(g)$. Therefore, it is enough to prove:
 \begin{itemize}
 \item[i)] For all $h\in S$, $h\cdot (x_1+\dots +x_r)^2\in \fa$.
 \item[(ii)] If $f\in K[x_1,\dots ,x_r]_2$ and $h\cdot f\in \fa$ for all $h\in S$ then $f\in (x_1^2,x_2^2,\dots ,x_r^2,(x_1+\dots +x_r)^2)$.
 \end{itemize}

 Let us prove (i). By symmetry we only have to check that $(x_{1}-x_{2}) (x_{3}-x_{4})\cdot \dots \cdot (x_{r-2}-x_{r-1})\cdot (x_1+\dots +x_r)^2\in \fa$ or, equivalently, $(x_{1}-x_{2}) (x_{3}-x_{4})\cdot \dots \cdot (x_{r-2}-x_{r-1})\cdot (\sum _{1\le i_1<i_2\le r } x_{i_1}x_{i_2})\in \fa$. We proceed by induction on $q$. For $q=1$ we have $(x_1-x_2)(x_1x_2+x_1x_3+x_2x_3)=x_1^2(x_2+x_3)-x_2^2(x_1+x_3)\in \fa$. Assume $q\ge 2$. By hypothesis of induction, it holds $$(x_{3}-x_{4})\cdot \dots \cdot (x_{r-2}-x_{r-1})\cdot (\sum _{3\le i_1<i_2\le r } x_{i_1}x_{i_2})\in (x_3^2,\dots ,x_r^2).$$
 Therefore, we have
 $$\begin{array}{rcl}(x_{1}-x_{2}) (x_{3}-x_{4})\cdot \dots \cdot (x_{r-2}-x_{r-1})\cdot (\sum _{1\le i_1<i_2\le r } x_{i_1}x_{i_2}) & = & (\text{ mod. } \fa) \\
 (x_{1}-x_2) (x_{3}-x_{4})\cdot \dots \cdot (x_{r-2}-x_{r-1})\cdot (x_1(x_2+\dots +x_r)+x_2(x_3+\dots x_r)) & = & (\text{ mod. } \fa) \\
 x_{1} (x_{3}-x_{4})\cdot \dots \cdot (x_{r-2}-x_{r-1})\cdot (x_2(x_3+\dots x_r)) & & \\
 -x_2 (x_{3}-x_{4})\cdot \dots \cdot (x_{r-2}-x_{r-1})\cdot (x_1(x_2+\dots +x_r))
  & = & (\text{ mod. } \fa) \\
  0 & &  (\text{ mod. } \fa) .
 \end{array}$$

 To prove (ii) we will check that if  $f\notin (x_1^2,\dots ,x_r^2,(x_1+\dots +x_r)^2)= (x_1^2,\dots ,x_r^2,\sum _{1\le i_1<i_2\le r}x_{i_1}x_{i_2})$ then $\exists h\in S$ such that $f\cdot h\notin \fa$. Reordering the variables, if necessary, any form of degree two $f\notin (x_1^2,\dots ,x_r^2,\sum _{1\le i_1<i_2\le r}x_{i_1}x_{i_2})$ can be written as
 $$f=a_1x_1^2+\dots +a_rx_r^2+a(\sum _{1\le i_1<i_2\le r}x_{i_1}x_{i_2})+\sum_{1\le i_1<i_2\le r} b_{i_1i_2}x_{i_1}x_{i_2}$$
 with $b_{12}=0$ and $b_{13}
 \neq 0$ or $b_{12}=\dots =b_{1r}=0$ and $b_{23}\neq 0$.
 If $b_{12}=0$ and $b_{13}
 \neq 0$, then $$(x_{2}-x_{3}) (x_{4}-x_{5})\cdot \dots \cdot (x_{r-1}-x_{r})\cdot (\sum _{1\le i_1<i_2\le r} b_{i_1i_2} x_{i_1}x_{i_2}) = b_{13}x_1x_2x_3\prod _{j=2}^qx_{2j} + (\text{ other monomials )}\notin \fa ,$$
 and, if $b_{12}=\dots =b_{1r}=0$ and $b_{23}\neq 0$, then $$(x_{1}-x_{2}) (x_{3}-x_{4})\cdot \dots \cdot (x_{r-2}-x_{r-1})\cdot (\sum _{1\le i_1<i_2\le r} b_{i_1i_2} x_{i_1}x_{i_2}) = b_{13}x_1x_2x_3\prod _{j=2}^qx_{2j} + (\text{ other monomials )}\notin \fa .$$
\end{proof}

\begin{lemma}\label{35}  Set $A=K[x_1,\dots ,x_r]/(x_1^2,\dots ,x_r^2,\ell ^2)$, $B=K[x_1,\dots ,x_r]/(\ell _1^2,\dots ,\ell _r^2,\ell _{r+1}^2)$ and $C=K[x_1,\dots ,x_r]/(q_1,\dots ,q_r,q_{r+1})$ where $\ell $ and $\ell _{i}$ are general linear forms and $q_j$ are general forms of degree 2. Then
$$H_A(t)=H_B(t)=H_C(t) \text{ for all } t\in \ZZ.$$
\end{lemma}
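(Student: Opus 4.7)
The plan is to split the proof into two equalities: show $H_A=H_B$ via a graded algebra isomorphism, and show $H_A=H_C$ by verifying that both Hilbert functions equal $H_\ast(t):=\max\bigl(0,\binom{r}{t}-\binom{r}{t-2}\bigr)$. The first of these will follow from Proposition \ref{acm}(2), the second from Stanley--Watanabe combined with upper and lower semicontinuity on the parameter space $(R_2)^{r+1}$.

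For the first equality, I would argue as follows: since $\ell_1,\dots,\ell_r$ are general, they form a basis of $R_1$, so there is an automorphism $\varphi$ of the graded ring $R$ sending $\ell_i\mapsto x_i$ for $i=1,\dots,r$. The image $\varphi(\ell_{r+1})=:\ell$ is again a general linear form, and $\varphi$ sends $(\ell_1^2,\dots,\ell_{r+1}^2)$ onto $(x_1^2,\dots,x_r^2,\ell^2)$; the resulting graded $K$-algebra isomorphism $B\cong A$ immediately yields $H_A=H_B$.

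For the second equality, a short case analysis (using $\binom{r}{q+1}=\binom{r}{q}$ and $\binom{r}{q-1}=\binom{r}{q+2}$ when $r=2q+1$, and the symmetry of the binomials when $r=2q$) will reduce the piecewise formula for $H_A$ in Proposition \ref{acm}(2) to $H_\ast$. To prove $H_C=H_\ast$ I would work on the parameter space $(R_2)^{r+1}$. For the upper bound $H_C\le H_\ast$, I would take the explicit tuple $(x_1^2,\dots,x_r^2,L^2)$ with $L$ general: by Stanley--Watanabe (Proposition \ref{WLPSLPci}), $R/\fa$ has the SLP, so $\times L^2\colon[R/\fa]_{t-2}\to[R/\fa]_t$ has maximal rank $\min\{\binom{r}{t-2},\binom{r}{t}\}$ for every $t$ (with source and target dimensions from Lemma \ref{HF_ci}); this implies $H_{R/(x_1^2,\dots,x_r^2,L^2)}(t)=H_\ast(t)$, and upper semicontinuity of the Hilbert function on $(R_2)^{r+1}$ then yields $H_C(t)\le H_\ast(t)$. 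For the reverse inequality, general quadrics $q_1,\dots,q_r$ cut out the empty subscheme of $\PP^{r-1}$ for dimension reasons and hence form a regular sequence, so $R/(q_1,\dots,q_r)$ has Hilbert function $\binom{r}{t}$ (the Hilbert function of a complete intersection depends only on the degrees); the trivial lower bound on the cokernel of the map $\times q_{r+1}\colon[R/(q_1,\dots,q_r)]_{t-2}\to[R/(q_1,\dots,q_r)]_t$ then gives $H_C(t)\ge\max(0,\binom{r}{t}-\binom{r}{t-2})=H_\ast(t)$. Combining yields $H_A=H_B=H_C=H_\ast$.

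The main obstacle will be establishing the upper bound $H_C\le H_\ast$: one must exhibit a single tuple in $(R_2)^{r+1}$ that realizes the minimal Hilbert function in every degree. The SLP of the monomial complete intersection supplies exactly such a witness via $(x_1^2,\dots,x_r^2,L^2)$, and this transfer from Stanley--Watanabe to almost complete intersections of general quadrics is the crucial step bridging $B$ and $C$.
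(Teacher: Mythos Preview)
Your proof is correct and rests on the same two pillars as the paper's: Proposition~\ref{acm}(2) to identify $H_A$, and Stanley--Watanabe (Proposition~\ref{WLPSLPci}) together with semicontinuity to pin down $H_C$. The organization differs slightly. The paper sandwiches $H_B$ between $H_A$ and $H_C$ via the chain of specializations $\{(x_1^2,\dots,x_r^2,\ell^2)\}\subset\{(\ell_1^2,\dots,\ell_{r+1}^2)\}\subset (R_2)^{r+1}$ and then closes the sandwich by computing $H_C$ directly, applying SLP to the general complete intersection $(q_1,\dots,q_r)$ so that multiplication by the general quadric $q_{r+1}$ has maximal rank. You instead dispose of $H_A=H_B$ by an explicit linear change of coordinates and then bound $H_C$ from both sides, using the monomial witness $(x_1^2,\dots,x_r^2,L^2)$ for the upper bound and the trivial cokernel estimate for the lower. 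Your route is marginally more explicit---it avoids the implicit step that a \emph{general} complete intersection of quadrics inherits SLP from the monomial one---while the paper's sandwich handles $H_B$ without invoking the isomorphism. The substance is the same.
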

\begin{proof} By semicontinuity we have
\begin{equation} \label{keyinequality} H_A(t)\le H_B(t)\le H_C(t) \text{ for all } t\in \ZZ.
\end{equation}
By Proposition \ref{acm}(2),  if  $r=2q+1$ we have
$$H_A(t)=\begin{cases} 0 & \text{ if } t<0 \text{ or } t> q+1, \\
  {r\choose t}-{r\choose t-2} & \text{ if } 0\le t \le q, \\
   {r\choose q}-{r\choose q-1} & \text{ if }  t=q+1 \end{cases}$$
   and, if $r=2q$, we have
   $$H_A(t)=\begin{cases} 0 & \text{ if } t<0 \text{ or } t> q\\
  {r\choose t}-{r\choose t-2} & \text{ if } 0\le t \le q.
   \end{cases}$$
By Proposition \ref{WLPSLPci}, $J=(q_1,\dots ,q_r)$ has the SLP. Hence, we have
$$H_C(t)=max\{ H_{K[x_1,\dots ,x_r]/J}(t)- H_{K[x_1,\dots ,x_r]/J}(t-2),0 \}=H_A(t)$$
 and we conclude $H_C(t)=H_A(t)$ for all $t\in \ZZ$. Therefore, using the inequalities (\ref{keyinequality}), we get $H_A(t)=H_B(t)=H_C(t)$ for all $t\in \ZZ$ which proves what we want.
\end{proof}

We are now ready to prove the main result of this paper.

\noindent {\em Proof of Theorem} \ref{main}
(1) For $2\le r \le 5$ or $r=7$ we can use Macaulay2 to check that $A$ has the WLP.

(2) For $r$ even the reader can see \cite{mmn}, Theorem 6.1. Let us assume $r=2q+1$, $q\ge 4$. Set $I=(\ell _1^2,\dots , \ell _{r+1}^2)\subset R:=K[x_1,\dots ,x_r]$ where $\ell _i$ are general linear forms. We  consider the exact sequence :
$$ 0\longrightarrow [I:\ell]/I \longrightarrow R/I \stackrel{\times \ell}{\longrightarrow} R/I(1) \longrightarrow R/(I,\ell )(1)\longrightarrow 0$$
where $\ell $ is a general linear form.
The multiplication by $\ell$  will fail to have maximal rank from degree $i-1$ to degree $i$ exactly when
\begin{equation}
  \label{eq:max-rank-I}
\dim_K [R/(I,\ell)]_i \neq \max\{ \dim_K [R/I]_i - \dim_K [R/I]_{i-1} , 0 \}.
\end{equation}

By Lemma \ref{35} and Proposition \ref{acm} we have
$$\dim _K(R/I)_q={2q+1\choose q}-{2q+1\choose q-2} \text{ and } \dim _K(R/I)_{q-1}={2q+1\choose q-1}-{2q+1\choose q-3}.$$
Since $\dim _K[R/I]_q> \dim _K[R/I]_{q-1}$, to prove that $R/I$ fails WLP it will be enough to check that
 $$\dim _K[R/I]_q<\dim_K[R/(I,\ell)]_q+ \dim _K[R/I]_{q-1}.$$
 By \cite{sx}, Corollaries 7.3 and 7.4, we have $\dim_K[R/(I,\ell)]_q=2^q$.  Therefore, we have to prove $$\dim _K[R/I]_q<2^q+ \dim _K[R/I]_{q-1}.$$ Let us prove it. Since ${n-1\choose k}-{n-1\choose k-1}=\frac{n-2k}{n}{n\choose k}$ and ${n\choose k}=\frac{n+1-k}{k}{n\choose k-1}$, we obtain
$$\dim _K[(R/I)]_q={2q+1\choose q}-{2q+1\choose q-2}<2^q+ \dim _K[(R/I)]_{q-1}=2^q+{2q+1\choose q-1}-{2q+1\choose q-3}$$ if and only if
$${2q+1\choose q}-{2q+1\choose q-1}<2^q+ {2q+1\choose q-2}-{2q+1\choose q-3}$$
if and only if
$${2q+2\choose q}\frac{1}{q+1}<2^q+ {2q+2\choose q-2}\frac{3}{q+1}$$
if and only if
$$2^q>\frac{1}{q+1}[{2q+2\choose q}-3{2q+2\choose q-2}=\frac{1}{q+1}{2q+2\choose q}[1-3\frac{(q-1)q}{(q+4)(q+3)}].$$
We easily check that the last equality is true for $q=4$ or $5$ and it is obviously true for $q\ge 6$ since we have $$1-3\frac{(q-1)q}{(q+4)(q+3)}=\frac{-2q^2+10q+12}{(q+4)(q+3)}=\frac{-2(q-6)(q+1)}{(q+4)(q+3)}\le 0.$$
\qed

It is worthwhile to point out that if instead of considering an almost complete intersection ideal $I=(\ell _1^2,\dots , \ell _{8}^2)\subset R:=K[x_1,\dots ,x_7]$ generated by the square of general linear forms $\ell _i$, we take an almost complete intersection ideal  $J=(q_1, \dots , q_{8})$ generated by general quadrics then  $J$ {\em do} have the WLP. We observe that $I$ and $J$ have the same Hilbert function  but the WLP behaviour is very different. This example illustrates how subtle the WLP is since  a minuscule change in the ideal  can affect the WLP.


\end{document}